\newtheorem{theorem}{Theorem}
\newtheorem{corollary}{Corollary}
\newtheorem{remark}{Remark}
\newtheorem{lemma}{Lemma}
\newcommand{\proj}{\operatorname{proj}}
\newcommand{\bqo}{\operatorname{BQO}}
\newcommand{\milo}{\operatorname{MILO}}
\newcommand{\vmilo}{\operatorname{V-MILO}}
\newcommand{\emilo}{\operatorname{E-MILO}}
\newcommand{\remilo}{\operatorname{RE-MILO}}
\newcommand{\misdo}{\operatorname{MISDO-I}}
\newcommand{\misdos}{\operatorname{MISDO-II}}
\newcommand{\suchthat}{\;\ifnum\currentgrouptype=16 \middle\fi|\;}
\newcommand{\vast}{\bBigg@{4}}
\newcommand{\Vast}{\bBigg@{2.5}}
\newcommand{\Biggpl}{\mathopen{\raisebox{-1ex}{$\Vast($}}}
\newcommand{\Biggpr}{\mathopen{\raisebox{-1ex}{$\Vast)$}}}
\newcommand{\Biggfl}{\mathopen{\raisebox{-.5ex}{$\Bigg($}}}
\newcommand{\Biggfr}{\mathopen{\raisebox{-.5ex}{$\Bigg)$}}}
\newcommand{\Bigfl}{\mathopen{\raisebox{-.5ex}{$\Big($}}}
\newcommand{\Bigfr}{\mathopen{\raisebox{-.5ex}{$\Big)$}}}
\newcommand{\relaxbqov}{\operatorname{\mathcal{R}^y_{BQO}}}
\newcommand{\relaxvmilo}{\operatorname{\mathcal{R}_{V-MILO}}}
\newcommand{\relaxbqoe}{\operatorname{\mathcal{R}^z_{BQO}}}
\newcommand{\relaxemilo}{\operatorname{\mathcal{R}_{E-MILO}}}
\newcommand{\relaxmisdo}{\operatorname{\mathcal{R}_{MISDO-I}}}
\newcommand{\relaxmisdoii}{\operatorname{\mathcal{R}_{MISDO-II}}}
\newcommand{\relaxbqom}{\operatorname{\mathcal{R}^Z_{BQO}}}
\newcommand{\relaxbqomii}{\operatorname{\mathcal{R}^{\bar{Z}}_{BQO}}}
\journal{Operations Research Letters}
\begin{document}

\begin{frontmatter}



\title{On relaxations of the max $k$-cut problem formulations}


\author[1]{Ramin Fakhimi}

\author[2]{Hamidreza Validi}

\author[3]{Illya V. Hicks}

\author[1]{Tam\'as  Terlaky}

\author[1]{Luis F. Zuluaga}

\address[1]{organization={Industrial and Systems Engineering, Lehigh University},
state={PA},
country={U.S.}}
            
\address[2]{organization={Industrial, Manufacturing \& Systems Engineering, Texas Tech University},
state={TX},
country={U.S.}}  
            
\address[3]{organization={Computational Applied Mathematics \& Operations Research, Rice University},
state={TX},
country={U.S.}}             

\begin{abstract}
A tight continuous relaxation is a crucial factor in solving mixed integer formulations of many NP-hard combinatorial optimization problems.
The (weighted) max $k$-cut problem is a fundamental combinatorial optimization problem with multiple notorious mixed integer optimization formulations.  
In this paper, we explore four existing mixed integer optimization formulations of the max $k$-cut problem. 
Specifically, we show that the continuous relaxation of a binary quadratic optimization formulation of the problem is: (i) stronger than the continuous relaxation of two mixed integer linear optimization formulations and (ii) at least as strong as the continuous relaxation of a mixed integer semidefinite optimization formulation. 
%
%
%
%
We also conduct a set of experiments on multiple sets of instances of the max $k$-cut problem using state-of-the-art solvers that empirically confirm the theoretical results in item (i). 
Furthermore, these numerical results illustrate the advances in the efficiency of global non-convex quadratic optimization solvers and more general mixed integer nonlinear optimization solvers.
As a result, these solvers provide a promising option to solve combinatorial optimization problems. 
%
%
Our codes and data are available on GitHub.
\end{abstract}


\begin{keyword}
the max $k$-cut problem \sep mixed integer optimization \sep semidefinite optimization \sep continuous relaxation  
\end{keyword}
\end{frontmatter}

\section{Introduction}
\label{intro}
The continuous relaxation of a mixed integer optimization formulation plays a fundamental role in the efficient solution process of not only linear formulations, but also non-linear ones of a mixed integer optimization problem~\citep{muller2022}.
The (weighted) max $k$-cut problem is a fundamental NP-hard combinatorial optimization problem~\citep{frieze1997, Papadimitriou1991_Optimization} with multiple mixed integer linear optimization formulations that suffer either from weak relaxation or large size.
The max $k$-cut problem has a wide range of applications, including but not limited to statistical physics~\citep{Barahona1988_Application, de1995_exact}, gas and power networks~\citep{hojny2020_mixed}, data clustering~\citep{poland2006}, and scheduling~\citep{carlson1966_scheduling}.
Given a graph $G=(V,E)$ with edge weights $w$ and a positive integer number $k \ge 2$, the max $k$-cut problem seeks to find at most $k$ partitions such that the weights of edges with endpoints in different partitions are maximized.


%

Motivated by the considerable effect of the continuous relaxation strength in solving mixed integer optimization formulations of the max $k$-cut to optimality, we discuss multiple known optimization formulations of the max $k$-cut problem in the literature: (i) a binary quadratic optimization ($\bqo$) formulation~\citep{carlson1966_scheduling}; (ii) a vertex-based mixed integer linear optimization ($\vmilo$) formulation; (iii) an edge-based  mixed integer linear optimization ($\emilo$) formulation~\citep{Chopra1993_partition}; and (iv) two mixed integer semidefinite optimization (MISDO) formulations~\citep{frieze1997_improved,Eisenblater2002_Semidefinite}.
We prove that the continuous relaxation of the $\bqo$ formulation is: (i) stronger than that of the $\vmilo$ and the $\emilo$ formulations and (ii) at least as strong as the mixed integer semidefinite optimization formulations.  
%
Further, we conduct a set of computational experiments to assess our theoretical results in practice.
Thanks to the recent advancements of state-of-the-art solvers (e.g., solvers for non-convex quadratic optimization formulations to optimality by Gurobi 10.0.0), we can solve the BQO formulation with these solvers. 
The numerical results support our theoretical ones regarding the strength of the continuous relaxation of the mixed integer programming formulations of the max $k$-cut problem.
The continuous relaxation of the BQO formulation provides a tighter upper bound compared to the other MILO formulations.
It also provides a high-quality upper bound for large-scale instances of the problem while the continuous relaxation of the MISDO formulations struggle to achieve.

\section{Mixed Integer Optimization Formulations}
Motivated by solving a scheduling problem,~\citet{carlson1966_scheduling} proposed a $\bqo$ formulation for the max $k$-cut problem. 
Let $n \coloneqq |V|$ and $m \coloneqq |E|$ respectively be the number of vertices and edges of graph $G = (V, E)$. 
Furthermore, we define $P \coloneqq \{1, \dots, k\}$ as the set of partitions and $w_{uv} \in \mathbb{R}$ as the edge weights for $\{u,v\} \in E$.
For every vertex $v \in V$ and every partition $j \in P$, binary variable $x_{vj}$ is one if vertex $v$ is assigned to partition $j$ and zero otherwise.
Then, the $\bqo$ formulation is as follows.
\begin{subequations}\label{eq: maxKcut_qp}
	\begin{alignat}{3}
	\max \quad &\sum_{\{u,v\} \in E} w_{uv}\Biggfl 1 - \sum_{j \in P} x_{uj}x_{vj} \Biggfr \label{eq: maxKcut_qp_1}\\
	   (\bqo)  \quad \text{s.t.} \quad &\sum_{j \in P} x_{vj} =1 &  \forall v \in V\label{eq: maxKcut_qp_2} \\
	&x \in \{0,1\}^{n \times k} \label{eq: maxKcut_qp_3}.
	\end{alignat}
\end{subequations}
Objective function~\eqref{eq: maxKcut_qp_1} maximizes the number of cut edges, and constraints~\eqref{eq: maxKcut_qp_2} imply that each vertex must be assigned to exactly one partition.
~\citet{carlson1966_scheduling} proved that an optimal solution of the continuous relaxation of $\bqo$ formulation~\eqref{eq: maxKcut_qp} can be converted into an optimal solution of its binary variant with the same objective value.

\begin{theorem}[\citet{carlson1966_scheduling}]\label{theorem: BQO vs its relxation}
An optimal solution of the $\bqo$ formulation~\eqref{eq: maxKcut_qp} is also optimal for its continuous relaxation.
\end{theorem}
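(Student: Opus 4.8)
The plan is to show that the optimal values of the $\bqo$ formulation~\eqref{eq: maxKcut_qp} and of its continuous relaxation coincide; since any binary optimal solution is automatically feasible for the relaxation, it will then follow immediately that such a solution is also optimal for the relaxation. Write $v^*_{\bqo}$ and $v^*_{\mathrm{rel}}$ for the two optimal values. Because the relaxed feasible region is obtained from that of~\eqref{eq: maxKcut_qp} by replacing the integrality requirement~\eqref{eq: maxKcut_qp_3} with $x \in [0,1]^{n\times k}$, the binary feasible set is contained in the relaxed one, and since both are maximization problems we get $v^*_{\mathrm{rel}} \ge v^*_{\bqo}$ for free. The real work is to establish the reverse inequality.

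To that end I would take an arbitrary feasible point $x$ of the relaxation and round it, one vertex at a time, to a binary feasible point whose objective value is at least that of $x$. The key structural observation is that the objective~\eqref{eq: maxKcut_qp_1} contains no square terms $x_{vj}^2$, since such terms would arise only from self-loops, which are absent in the simple graph $G$; hence every quadratic monomial has the form $x_{uj}x_{vj}$ with $u \ne v$. Consequently, if the assignments of all vertices except a single vertex $v$ are held fixed, the objective is an \emph{affine} function of the block $x_{v\cdot} = (x_{v1},\dots,x_{vk})$. By constraint~\eqref{eq: maxKcut_qp_2} together with $x_{v\cdot}\ge 0$, this block ranges over the standard simplex $\Delta_{k-1}=\{y\in[0,1]^k : \sum_{j\in P} y_j = 1\}$, whose vertices are exactly the unit vectors $e_1,\dots,e_k$, i.e. the binary assignments of $v$ to a single partition.

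Since an affine function attains its maximum over a bounded polytope at one of its vertices, I can replace $x_{v\cdot}$ by a suitable unit vector $e_{j(v)}$ without decreasing the objective, while preserving feasibility. Repeating this replacement for every $v \in V$ in turn produces a point $\hat x \in \{0,1\}^{n\times k}$ satisfying~\eqref{eq: maxKcut_qp_2} whose objective value is at least that of the original $x$. Applying this to an optimal relaxed solution yields a binary feasible solution of value at least $v^*_{\mathrm{rel}}$, so that $v^*_{\bqo} \ge v^*_{\mathrm{rel}}$, and the two optimal values are equal. Note that this argument does not depend on the signs of the weights $w_{uv}$, since maximizing an affine function over the simplex lands at a vertex regardless of the coefficient signs.

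The only step that requires genuine care, and thus the crux of the argument, is the affineness of the objective in each single-vertex block; this hinges entirely on the absence of self-loops and hence of $x_{vj}^2$ terms. Once this is in hand, the vertex-by-vertex rounding is routine: each rounding step preserves feasibility of the untouched blocks and the order of processing is immaterial. With $v^*_{\bqo}=v^*_{\mathrm{rel}}$ established, any optimal binary solution is feasible for the relaxation and attains the common optimal value, and is therefore optimal for the continuous relaxation, as claimed.
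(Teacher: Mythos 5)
Your proposal is correct. The paper itself offers no proof of this theorem --- it simply cites Carlson and Nemhauser (1966) --- but your argument is precisely the classical one underlying that citation: the objective~\eqref{eq: maxKcut_qp_1} is multilinear (no $x_{vj}^2$ terms, since $G$ has no self-loops), hence affine in each vertex's block $x_{v\cdot}$ over the simplex defined by~\eqref{eq: maxKcut_qp_2}, so a vertex-by-vertex rounding to unit vectors never decreases the objective, forcing the two optimal values to coincide and making any binary optimum also optimal for the relaxation. No gaps.
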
 
A consequence of Theorem~\ref{theorem: BQO vs its relxation} is that recently improved global non-convex quadratic solvers, as well as more general mixed integer nonlinear optimization solvers, can be used to solve the max $k$-cut problem to optimality. As our numerical results illustrate in Section~\ref{section: computational experiments}, indeed it turns out that this solution approach is promising.


One can linearize the $\bqo$ formulation~\eqref{eq: maxKcut_qp} to develop a $\milo$ formulation of the max $k$-cut problem that is called the vertex-based MILO ($\vmilo$) formulation in this paper. 
For every edge $\{u,v\} \in E$, binary variable $y_{uv}$ is one if the endpoints of edge $\{u,v\}$ belong to different partitions; that is $\{u,v\}$ is a {\em cut edge}, and zero otherwise. 

\begin{subequations}\label{eq:maxKcut}
	\begin{alignat}{3}
	\max \quad &\sum_{\{u,v\} \in E} w_{uv} y_{uv} \label{eq:obj_maxKcut}\\
	\text{s.t.} \quad &\sum_{j\in P} x_{vj} =1 &  \forall v \in V  \label{eq:clusterCons} \\
	(\vmilo)\quad & x_{uj} - x_{vj} \le y_{uv} &   \nonumber\\
	 & x_{vj} - x_{uj} \le y_{uv} &  \forall \{u,v\} \in E, \ j \in P  \label{eq:reduncantCon1} \\
	\quad & x_{uj} + x_{vj} + y_{uv} \le 2 & \quad \forall \{u,v\} \in E, \ j \in P  \label{eq:necessaryCons}\\
	&x \in \{0,1\}^{n \times k}\\
        &y \in \{0,1\}^{m}.
	\end{alignat}
\end{subequations}
Objective function~\eqref{eq:obj_maxKcut} maximizes the total weight of cut edges. 
Constraints~\eqref{eq:clusterCons} imply that every vertex is assigned to exactly one partition.
Constraints~\eqref{eq:reduncantCon1} imply that if endpoints of an edge belong to different partitions, then it is a cut edge.
Constraints~\eqref{eq:necessaryCons} imply that if the endpoints of an edge belong to the same partition, then the edge cannot be a cut edge. 
Despite the reasonable size of formulation~\eqref{eq:maxKcut} having $kn+m$  variables and $n+3km$ constraints, it suffers from weak continuous relaxation and symmetry issues~\citep{hojny2020_mixed}.
%

%
%
Another classical $\milo$ formulation is a large edge-based MILO ($\emilo$) formulation with $\binom{n}{2}$ variables and $3{\binom{n}{3}} +\binom{n}{k+1}$ constraints~\citep{Chopra1993_partition, chopra1995_facets}. 
Although the continuous relaxation of this formulation provides a relatively tight upper bound in practice, classical solvers struggle to solve even medium-size instances of the max $k$-cut problem to optimality~\cite{Wang2020-k-partition}. 
For every set $S$, we employ $\binom{S}{2}$ to denote all subsets of $S$ with size 2.
For every pair of vertices $\{u, v\} \in \binom{V}{2}$, we define binary variable $z_{uv}$ as follows: 
$z_{uv}$ is one if vertices $u$ and $v$ belong to the same partition, and zero otherwise.
\begin{subequations}\label{eq:E-MILO}
	\begin{alignat}{3}
	\max \quad  &\sum_{\{u,v\} \in E} w_{uv}(1 - z_{uv}) \label{eq:obj_E-MILO}\\
 	\text{s.t.} \quad & z_{uv} + z_{vw} \le 1 + z_{uw}  \nonumber\\
 	\quad & z_{uw} + z_{uv} \le 1 + z_{vw}  \nonumber\\
	(\emilo)\quad & z_{vw} + z_{uw} \le 1 + z_{uv} &  \forall \{u,v,w\} \subseteq V  \label{eq:triangle}\\
	\quad & \sum_{\{u,v\} \in \binom{Q}{2}} z_{uv} \ge 1 &  \forall Q \subseteq V, |Q| = k+1\label{eq:atmostk}
	\\
	&z \in \{0,1\}^{\binom{n}{2}}.
	\end{alignat}
\end{subequations}
Objective function~\eqref{eq:obj_E-MILO} maximizes the total weight of cut edges. Constraints~\eqref{eq:triangle} imply that for every set $\{u,v,w\} \subseteq V$, if pairs $\{u,v\}$ and $\{v,w\}$ belong to a partition, then vertices~$u$ and~$w$ also belong to the same partition.
Constraints~\eqref{eq:atmostk} imply that vertex set~$V$ must be partitioned into at most~$k$ partitions.
Because of the large number of constraints~\eqref{eq:atmostk}, one can add them on the fly~\cite{gurobi}. 
\citet{chopra1995_facets} conducted a polyhedral study on the max $k$-cut problem and proposed several facet-defining inequalities for the $\emilo$ formulation.
They also studied the $\vmilo$ and $\emilo$ formulations for the min $k$-cut problem and developed multiple facet-defining inequalities for both formulations~\citep{Chopra1993_partition}. 

Further,~\citet{Wang2020-k-partition} propose a reduced $\emilo$ ($\remilo$) formulation that is constructed as follows: (i) graph $G$ is extended to a chordal graph, (ii) all maximal cliques of the chordal graph are found, (iii) binary variables $z$ are created \emph{only} for the edge set of the chordal graph, and (iv) constraints~\eqref{eq:triangle}--\eqref{eq:atmostk} are added \emph{only} for the maximal cliques. 
The number of variables and constraints in their formulation is fewer than or equal to that of the $\emilo$ formulation~\eqref{eq:E-MILO}.
However, for dense graphs in which the chordalized graph is complete, they are the same as the $\emilo$ formulation.
They show that their formulation outperforms the $\emilo$ formulation when the chordalized graph is sparse.

We also provide an existing mixed integer semidefinite optimization (MIS\-DO) formulation~\citep{frieze1997}. 
For every $(u,v) \in V \times V$, binary variable $Z_{uv}$ is one if vertices $u$ and $v$ belong to the same partition. 
Then the formulation is as follows.
\begin{subequations}\label{eq:MI-SDO}
	\begin{alignat}{3}
	\max\quad &\sum_{\{u,v\} \in E} w_{uv}(1 - Z_{uv}) \label{eq:misdo-obj} \\
 	(\misdo) \quad \text{s.t.} \quad & Z_{vv} = 1 & \forall v \in V \label{eq:misdo-diag}\\
 	\quad & k Z \succeq   e e^T  \label{eq:misdo-psd}\\
	 & Z \in \{0,1\}^{n \times n}.
	\end{alignat}
\end{subequations}

Motivated by the superiority of the computational performance of an alternative MISDO formulation that employs $\{-1/(k-1), 1\}$ variables instead of binary ones (e.g., see~\citet{Eisenblater2002_Semidefinite} and~\citet{de2019}), we provide the following remark. 
\begin{remark}[\citet{frieze1997}]
\label{remark: misdo2}
Let $\bar{Z} = \frac{k}{k - 1} Z - \frac{e e^T}{k - 1}$ with $Z$ be the decision variable in the $\misdo$ formulation~\eqref{eq:MI-SDO}. 
We can rewrite $\misdo$ formulation~\eqref{eq:MI-SDO} as follows.
    \begin{subequations}\label{eq:MI-SDO2}
	\begin{alignat}{3}
	\max\quad & \frac{(k-1)}{k}\sum_{\{u,v\} \in E} w_{uv}(1 - \bar{Z}_{uv}) \label{eq:misdo2-obj} \\
 	(\misdos) \quad \text{s.t.} \quad & \bar{Z}_{vv} = 1 & \forall v \in V \label{eq:misdo2-diag}\\
 	\quad &  \bar{Z} \succeq  0 \label{eq:misdo2-psd}\\
	 & \bar{Z} \in \bigg\{-\frac{1}{k - 1},1 \bigg\}^{n \times n}.
	\end{alignat}
\end{subequations}
\end{remark}

Interested readers are encouraged to refer to~\cite{ sotirov2014_efficient, van2016_new, Wang2020-k-partition, lu2021_branch} for more details on semidefinite optimization and mixed integer semidefinite optimization formulations of the max $k$-cut.
%

%
%

\section{A Theoretical Comparison of Relaxations} \label{section: theoretical comparison}
In this section, we provide theoretical comparisons between the continuous relaxations of $\bqo$ formulation~\eqref{eq: maxKcut_qp} and formulations~\eqref{eq:maxKcut}-\eqref{eq:MI-SDO}.
For analysis purposes, we introduce $y$ variables to the $\bqo$ formulation~\eqref{eq: maxKcut_qp}: 
for every edge $\{u,v\} \in E$, variable $y_{uv}$ equals one if $\{u,v\}$ is a cut edge.
\begin{equation}
    y_{uv} = 1 - \sum_{j \in P} x_{uj}x_{vj}  \qquad \forall \{u,v\} \in E \label{eq: maxKcut_qp y variable definition}. 
\end{equation}
Furthermore, we define the set of lifted continuous relaxation of the $\bqo$ formulation as follows.
\begin{align*}
    \relaxbqov \coloneqq \bigg\{&(x,y) \in [0,1]^{n \times k} \times \mathbb{R}^m \ \Big| \ \\
    &(x,y)\text{ satisfies constraints~\eqref{eq: maxKcut_qp_2}~\text{and }\eqref{eq: maxKcut_qp y variable definition}}\bigg\}.
\end{align*}

The following remark shows that we do not need to impose 0-1 bounds on $y$ variables. 

\begin{remark}
	Constraints $y \in [0,1]^m$ are implied by the $\bqo$ formulation~\eqref{eq: maxKcut_qp} and constraints~\eqref{eq: maxKcut_qp y variable definition}.
 \end{remark}

 To see this, consider  a point $(\hat{x}, \hat{y}) \in \relaxbqov$.  
For every edge $\{u,v\} \in E$, we have
\begin{align*}
    \hat{y}_{uv} = 1 - \sum_{j \in P} \hat{x}_{uj} \hat{x}_{vj} \ge 1 - \sum_{j \in P} \hat{x}_{uj} = 1 - 1 = 0.
\end{align*}
The first equality holds by constraints~\eqref{eq: maxKcut_qp y variable definition}. 
The inequality holds because, for any partition $j \in P$, we have $x_{vj} \le 1$. 
The second equality holds by constraints~\eqref{eq: maxKcut_qp_2}.
Furthermore, we have
\begin{align*}
    \hat{y}_{uv} = 1 - \sum_{j \in P} \hat{x}_{uj}\hat{x}_{vj} \le 1 - 0 = 1.
\end{align*}
The first equality holds by constraints~\eqref{eq: maxKcut_qp y variable definition}. 
The inequality holds because for any partition $j \in P$, and any vertex $v \in V$, we have $x_{vj} \ge 0$. 
%
%

Next, we compare the strength of the continuous relaxation of the BQO formulation~\eqref{eq: maxKcut_qp} against the continuous relaxations of the other formulations presented above.

\subsection{$\bqo$ vs. $\vmilo$}
First, we prove Lemma~\ref{lemma: bilinear valid inequality} that will be used in our further analyses. 
We define $[n] \coloneqq \{1, \dots, n\}$ for every $n \in \mathbb{Z}_{++}$.
\begin{lemma}\label{lemma: bilinear valid inequality}
Let $a \in [0, 1]^n$ with $n \ge 2$.
Then, we have
\begin{equation}\label{eq:bilinear valid inequality}
        1 - \sum_{i\in [n]} a_i + \sum_{\{i,j\}\in \binom{[n]}{2}} a_i a_j\geq 0.
\end{equation}
\end{lemma}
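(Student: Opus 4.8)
The plan is to exploit the structure of the left-hand side of~\eqref{eq:bilinear valid inequality}, viewed as the function $f(a) \coloneqq 1 - \sum_{i \in [n]} a_i + \sum_{\{i,j\} \in \binom{[n]}{2}} a_i a_j$. The key observation is that $f$ is \emph{multilinear}: no variable appears with degree larger than one, so for each coordinate $a_i$ the function is affine in $a_i$ when the remaining coordinates are held fixed. A multilinear function attains its minimum over the box $[0,1]^n$ at a vertex of the box, i.e., at a point $a \in \{0,1\}^n$. Consequently, it suffices to verify $f(a) \ge 0$ only at the finitely many binary points, which reduces the continuous inequality to a short combinatorial check.

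To make the reduction precise, I would argue by coordinate descent: fixing all coordinates but $a_i$, the restriction $a_i \mapsto f(a)$ is affine on $[0,1]$ and hence attains its minimum at $a_i \in \{0,1\}$; replacing $a_i$ by that endpoint does not increase the value, and iterating over $i = 1, \dots, n$ drives any candidate minimizer to a vertex. At a vertex, set $S \coloneqq \{i \in [n] : a_i = 1\}$ and $s \coloneqq |S|$. Then $\sum_{i \in [n]} a_i = s$ and $\sum_{\{i,j\} \in \binom{[n]}{2}} a_i a_j = \binom{s}{2}$, since a product $a_i a_j$ is nonzero exactly when both indices lie in $S$. This yields the closed form
\begin{equation*}
    f(a) = 1 - s + \binom{s}{2} = \frac{(s-1)(s-2)}{2}.
\end{equation*}

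The product $(s-1)(s-2)$ is negative only for $s$ in the open interval $(1,2)$, which contains no integer; since $s \in \{0, 1, \dots, n\}$ is a nonnegative integer, $f(a) \ge 0$ at every vertex, and the inequality follows. The only real subtlety, and thus the main point to state carefully, is the justification that the minimum of a multilinear function over a box occurs at a vertex; everything after that is a direct evaluation. As an alternative that sidesteps this lemma, one could induct on $n$: writing $f$ as an affine function of $a_n$ shows its minimum over $a_n \in [0,1]$ is attained at $a_n = 0$, giving the $(n-1)$-variable expression (nonnegative by the inductive hypothesis), or at $a_n = 1$, giving $\sum_{\{i,j\} \in \binom{[n-1]}{2}} a_i a_j \ge 0$, with the base case $n = 2$ reducing to $(1-a_1)(1-a_2) \ge 0$.
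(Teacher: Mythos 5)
Your proof is correct, and it takes a genuinely different route from the paper's. The paper argues by induction on $n$: the base case is the same factorization $(1-a_1)(1-a_2)\ge 0$, and the inductive step multiplies the $s$-variable expression by the nonnegative factor $(1-a_{s+1})$, expands, and discards the nonpositive cubic term $-a_{s+1}\sum_{\{i,j\}\in\binom{[s]}{2}}a_ia_j$. Your main argument instead observes that the left-hand side is multilinear, reduces by coordinate descent to the vertices of $[0,1]^n$, and evaluates the closed form $1-s+\binom{s}{2}=\tfrac{(s-1)(s-2)}{2}\ge 0$ for integer $s$. Both are valid; your reduction requires justifying the ``multilinear functions attain their box minimum at a vertex'' step (which your affine-restriction argument does correctly), whereas the paper's induction is fully self-contained at the cost of being less transparent about where the inequality comes from. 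What your route buys is extra information: the explicit vertex values show the inequality is tight precisely when exactly one or exactly two coordinates equal $1$, and the technique extends verbatim to any multilinear expression over a box. Your closing alternative induction (splitting on $a_n\in\{0,1\}$ after noting affinity in $a_n$) is essentially a streamlined version of the paper's inductive step and is also correct.
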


\begin{proof}

We prove the claim by induction.
First, we show that the inequality holds for the base case $n=2$. 
In this case, we have
\begin{align}\label{eq:bilinear valid inequality simple case}
 1 - a_1 - a_2 + a_1 a_2 = (1- a_1) (1- a_2) \ge 0.
\end{align}
The inequality~\eqref{eq:bilinear valid inequality simple case} holds because for every $i \in \{1, 2\}$, we have $1- a_i \geq 0$.

Now suppose that inequality~\eqref{eq:bilinear valid inequality} holds for $n=s \ge 2$ (induction hypothesis).
It suffices to show that it also holds for $n=s+1$.
\begin{subequations}
\begin{alignat}{2}
0 & \leq 
\Biggpl 1 -  \sum_{i\in [s]} a_i + \sum_{\{i,j\}\in \binom{[s]}{2}} a_i a_j 
\Biggpr
(1 - a_{s+1}) \label{eq:bilinear valid inequality-a}\\
& = 1 - \sum_{i\in [s+1]} a_i + \sum_{\{i,j\}\in \binom{[s+1]}{2}} a_i a_j - a_{s+1}\sum_{\{i,j\}\in \binom{[s]}{2}} a_i a_j  \label{eq:bilinear valid inequality-b}\\
& \leq 1 - \sum_{i\in [s+1]} a_i + \sum_{\{i,j\}\in \binom{[s+1]}{2}} a_i a_j. \label{eq:bilinear valid inequality-c}
\end{alignat}
\end{subequations}
Inequality~\eqref{eq:bilinear valid inequality-a} holds by induction hypothesis and because $1 - a_{s+1} \geq 0$.
Inequality~\eqref{eq:bilinear valid inequality-c} holds as $-a_{s+1}\sum\limits_{\{i,j\}\in \binom{[s]}{2}} a_i a_j \leq 0$.
%
\end{proof}

Now we define the polytope of the continuous relaxation of the $\vmilo$ formulation~\eqref{eq:maxKcut} as follows.
\begin{align*}
 \relaxvmilo \coloneqq \bigg\{&(x,y) \in [0,1]^{n \times k } \times [0,1]^{ m} \ \Big| \ \\
 &(x,y) \text{ satisfies constraints~\eqref{eq:clusterCons}--\eqref{eq:necessaryCons}}\bigg\}.
\end{align*}

Theorem~\ref{proposition: bqo relaxation vs milo} shows that the continuous relaxation of the $\bqo$ formulation is stronger than that of the $\vmilo$ formulation.
\begin{theorem} \label{proposition: bqo relaxation vs milo}
 $ \relaxbqov \subset \relaxvmilo$.
\end{theorem}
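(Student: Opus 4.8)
The plan is to establish the two set-containments separately: first the inclusion $\relaxbqov \subseteq \relaxvmilo$, and then strictness by exhibiting a point of $\relaxvmilo$ that is excluded from $\relaxbqov$.

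For the inclusion, I would take an arbitrary $(\hat x, \hat y) \in \relaxbqov$ and check that it satisfies every defining constraint of $\relaxvmilo$. The box constraints $\hat x \in [0,1]^{n\times k}$ and $\hat y \in [0,1]^m$ are immediate (the latter is exactly the preceding remark), and the assignment constraints~\eqref{eq:clusterCons} coincide with~\eqref{eq: maxKcut_qp_2}, so they are shared. The substance is to verify~\eqref{eq:reduncantCon1} and~\eqref{eq:necessaryCons}. For the redundant constraints, I would fix an edge $\{u,v\}\in E$ and a partition $j\in P$, substitute $\hat y_{uv}=1-\sum_{\ell\in P}\hat x_{u\ell}\hat x_{v\ell}$, and use $\sum_{\ell\in P}\hat x_{u\ell}=1$ to rewrite the slack as
\begin{equation*}
\hat y_{uv}-(\hat x_{uj}-\hat x_{vj}) = \sum_{\ell\in P\setminus\{j\}}\hat x_{u\ell}(1-\hat x_{v\ell}) + \hat x_{vj}(1-\hat x_{uj}),
\end{equation*}
which is a sum of products of numbers in $[0,1]$ and hence nonnegative; the symmetric inequality follows by exchanging the roles of $u$ and $v$. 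For~\eqref{eq:necessaryCons}, I would note that $\sum_{\ell\in P}\hat x_{u\ell}\hat x_{v\ell}\ge \hat x_{uj}\hat x_{vj}$ and that $\hat x_{uj}\hat x_{vj}\ge \hat x_{uj}+\hat x_{vj}-1$, the latter being exactly the base case $n=2$ of Lemma~\ref{lemma: bilinear valid inequality}; together these give $\hat x_{uj}+\hat x_{vj}+\hat y_{uv}\le 2$.

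For strictness, I would produce an explicit fractional point. The key structural difference is that in $\relaxbqov$ the value $y_{uv}$ is pinned down by $x$ through the equality~\eqref{eq: maxKcut_qp y variable definition}, whereas in $\relaxvmilo$ the variable $y_{uv}$ is only constrained by inequalities and thus retains a full interval of freedom whenever $x$ is fractional. Concretely, I would take a single edge with $k=2$ and set $\hat x_{u\cdot}=\hat x_{v\cdot}=(\tfrac12,\tfrac12)$; then~\eqref{eq: maxKcut_qp y variable definition} forces $y_{uv}=\tfrac12$ in $\relaxbqov$, while the pair $(\hat x, y_{uv}=1)$ satisfies all of~\eqref{eq:clusterCons}--\eqref{eq:necessaryCons} and therefore lies in $\relaxvmilo$ but not in $\relaxbqov$.

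I expect the redundant-constraint verification to be the main obstacle, in the sense that it is the only step requiring a nonobvious algebraic rearrangement: the slack must be regrouped, using the assignment constraint to turn the constant $1$ into $\sum_{\ell\in P} \hat x_{u\ell}$, so that it becomes manifestly a sum of nonnegative terms. Everything else reduces either to the preceding remark, to the base case of Lemma~\ref{lemma: bilinear valid inequality}, or to the single explicit witness used for strictness.
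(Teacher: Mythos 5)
Your proposal is correct and follows essentially the same route as the paper's proof: the same split into inclusion plus a strictness witness, the same use of the assignment constraint to turn the constant $1$ into $\sum_{\ell\in P}\hat x_{u\ell}$ so that the slack of~\eqref{eq:reduncantCon1} becomes a sum of nonnegative products, the same appeal to the base case of Lemma~\ref{lemma: bilinear valid inequality} for~\eqref{eq:necessaryCons}, and the same half-half fractional point with $\hat y_{uv}=1$ for strictness. The only (cosmetic) difference is that you state the witness for a single edge with $k=2$, whereas the paper pads the same point with zeros in partitions $3,\dots,k$ and uses it at every vertex so that it works for an arbitrary instance; your construction extends to that setting verbatim.
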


\begin{proof}
	Let point $(\hat{x}, \hat{y}) \in \relaxbqov$. 
	First, we are to show that $(\hat{x}, \hat{y}) \in \relaxvmilo$.
	We show that $(\hat{x}, \hat{y})$ satisfies constraints~\eqref{eq:reduncantCon1}. For every edge $\{u,v\} \in E$ and every partition $j \in P$, we have

	\begin{subequations}
		\begin{alignat}{2}
	\hat{y}_{uv} &= 1 - \sum_{i \in P} \hat{x}_{ui} \hat{x}_{vi} \label{eq: maxKcut relaxation a}\\
	&= \sum_{i \in P} \hat{x}_{ui} - \sum_{i \in P} \hat{x}_{ui} \hat{x}_{vi} \label{eq: maxKcut relaxation b}\\
	&= \hat{x}_{uj} + \sum_{i \in P \setminus \{j\}} \hat{x}_{ui} - \hat{x}_{uj} \hat{x}_{vj} - \sum_{i \in P\setminus \{j\}} \hat{x}_{ui} \hat{x}_{vi} \label{eq: maxKcut relaxation c}\\
	&\ge \hat{x}_{uj} + \sum_{i \in P \setminus \{j\}} \hat{x}_{ui} - \hat{x}_{vj} - \sum_{i \in P \setminus \{j\}} \hat{x}_{ui} \hat{x}_{vi} \label{eq: maxKcut relaxation d}\\
	&= \hat{x}_{uj} - \hat{x}_{vj} + \sum_{i \in P \setminus \{j\}} \hat{x}_{ui} (1 - \hat{x}_{vi}) \label{eq: maxKcut relaxation e}\\
	&\ge \hat{x}_{uj} - \hat{x}_{vj}. \label{eq: maxKcut relaxation f}
	\end{alignat}
	\end{subequations}
	
	Equality \eqref{eq: maxKcut relaxation a} holds by constraints~\eqref{eq: maxKcut_qp y variable definition}. 
	Equality \eqref{eq: maxKcut relaxation b} follows from constraint \eqref{eq: maxKcut_qp_2}.
	Inequality \eqref{eq: maxKcut relaxation d} holds because $\hat{x}_{uj} \leq 1$. 
	Inequality \eqref{eq: maxKcut relaxation f} holds because $\sum_{i \in P \setminus \{j\}} \hat{x}_{ui} (1 - \hat{x}_{vi}) \geq 0$. 
	%
	
	Finally, we show that $(\hat{x}, \hat{y})$ satisfies constraints~\eqref{eq:necessaryCons}. 
	For every edge $\{u,v\} \in E$ and every partition $j \in P$, we have
	\begin{subequations}
		\begin{alignat}{2}
	\hat{y}_{uv} &= 1 - \sum_{i \in P} \hat{x}_{ui} \hat{x}_{vi}  \label{eq: maxKcut relaxation necessaryCons a}\\
	&=1 - \hat{x}_{uj}\hat{x}_{vj} - \sum_{i \in P \setminus \{j\}} \hat{x}_{ui} \hat{x}_{vi} \label{eq: maxKcut relaxation necessaryCons b}\\
	&\le 2 - 1 - \hat{x}_{uj}\hat{x}_{vj}  \label{eq: maxKcut relaxation necessaryCons c}\\
	&\leq 2 - (\hat{x}_{uj} + \hat{x}_{vj}).  \label{eq: maxKcut relaxation necessaryCons f}
	\end{alignat}
	\end{subequations}
	Equality \eqref{eq: maxKcut relaxation necessaryCons a} holds by constraints~\eqref{eq: maxKcut_qp y variable definition}. 
	Inequality \eqref{eq: maxKcut relaxation necessaryCons c} holds by inequality~\eqref{eq:bilinear valid inequality simple case} in Lemma~\ref{lemma: bilinear valid inequality}  and $\sum\limits_{i \in P \setminus \{j\}} \hat{x}_{ui} \hat{x}_{vi} \geq~0$.

	Now, we are to show that there exists a point $(\hat{x}, \hat{y}) \in \relaxvmilo$ such that $(\hat{x}, \hat{y}) \not \in\relaxbqov$.
	For every $v \in V$, let $\hat{x}_{v1}=\hat{x}_{v2} =0.5$. For every vertex $v \in V$ and every partition $j \in \{3,4,\dots, k\}$, we define $\hat{x}_{vj}=0$. 
	Furthermore, for every edge $\{u,v\} \in E$, we define $\hat{y}_{uv} = 1$. 
        It is simple to check that $(\hat{x}, \hat{y})$ in $\relaxvmilo$.
	So, point $(\hat{x}, \hat{y}) \in \relaxvmilo \setminus \relaxbqov$ because $(\hat{x}, \hat{y})$ violates constraints~\eqref{eq: maxKcut_qp y variable definition}. 
	Thus, the proof is complete.
\end{proof}

The following remark shows the $\vmilo$ formulation has a weak relaxation.

\begin{remark}\label{remark: optimal_obj_vmilo}
The optimal objective of the continuous relaxation for the $\vmilo$ formulation~\eqref{eq:maxKcut} is equal to $\sum_{\{u,v\}\in E} \max\{w_{uv}, 0\}$. 
\end{remark}

To see this,  note that an optimal solution $(x^*,y^*)$ for the continuous relaxation of the $\vmilo$ formulation~\eqref{eq:maxKcut} is obtained by setting $x^*_{vj} = \frac{1}{k}$ for every $v \in V$ and $j\in P$. 
Also for every edge $\{u,v\}\in E$, we set $y^*_{uv}$ to $1$ if $w_{uv} > 0$ and $0$ otherwise.
\subsection{$\bqo$ vs. $\emilo$}
To conduct a theoretical comparison between the continuous relaxations of the $\bqo$ and $\emilo$ formulations, we lift the dimensionality of the $\bqo$ by introducing new $z$ variables.
\begin{equation}\label{eq:z variable definition}
    z_{uv} \coloneqq \sum_{j \in P} x_{uj}x_{vj}, \qquad \forall \{u,v\} \in {\binom{V}{2}}.
\end{equation}
We define the set of lifted continuous relaxation of the $\bqo$ formulation in $z$-space as follows.
\begin{align*}
    \relaxbqoe \coloneqq \bigg\{&(x,z) \in [0,1]^{n \times k} \times \mathbb{R}^{{\binom{n}{2}}} \ \Big| \ \\
    & (x,z)\text{ satisfies constraints~\eqref{eq: maxKcut_qp_2}~\text{and }\eqref{eq:z variable definition}}\bigg\}.
\end{align*}

We also define the polytope of the $\emilo$ formulation as follows.
\begin{align*}
 \relaxemilo \coloneqq \bigg\{z \in [0,1]^{\binom{n}{2}} \ \Big| \ z \text{ satisfies constraints~\eqref{eq:triangle}--\eqref{eq:atmostk}}\bigg\}.
\end{align*}

We show that the continuous relaxation of a projection of the lifted $\bqo$ formulation on the $z$ space is stronger than that of the $\emilo$ formulation.
\begin{theorem} \label{proposition: bqo relaxation vs E-MILO}
 $\proj_{z}\relaxbqoe \subset \relaxemilo$ for $n > k$.
\end{theorem}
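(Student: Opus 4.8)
The plan is to prove the two set relations separately: the inclusion $\proj_{z}\relaxbqoe \subseteq \relaxemilo$ by taking an arbitrary feasible point and checking every defining constraint of $\relaxemilo$, and then strictness by producing a single point of $\relaxemilo$ that no feasible $x$ can reproduce. Throughout I fix $(\hat{x},\hat{z})\in\relaxbqoe$, so that $\hat{z}_{uv}=\sum_{j\in P}\hat{x}_{uj}\hat{x}_{vj}$ with $\sum_{j\in P}\hat{x}_{vj}=1$ for every $v$, and I abbreviate $a_j=\hat{x}_{uj}$, $b_j=\hat{x}_{vj}$, $c_j=\hat{x}_{wj}$ whenever a triple $\{u,v,w\}$ is under consideration.

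For the inclusion I would verify the three families defining $\relaxemilo$ in turn. The bound $\hat{z}\in[0,1]^{\binom{n}{2}}$ is immediate: $\hat{z}_{uv}\ge 0$ since each product is nonnegative, and $\hat{z}_{uv}=\sum_{j}\hat{x}_{uj}\hat{x}_{vj}\le\sum_{j}\hat{x}_{uj}=1$ by \eqref{eq: maxKcut_qp_2}. For the triangle inequalities \eqref{eq:triangle} it suffices, by relabeling of the three vertices, to prove $\hat{z}_{uv}+\hat{z}_{vw}\le 1+\hat{z}_{uw}$ (the apex being the common vertex $v$); using $\sum_{j}b_j=1$ I would rewrite $1+\hat{z}_{uw}-\hat{z}_{uv}-\hat{z}_{vw}=\sum_{j}\bigl(b_j(1-a_j-c_j)+a_jc_j\bigr)$ and show each summand is nonnegative: if $1-a_j-c_j\ge 0$ this is clear, and otherwise $b_j\le 1$ gives $b_j(1-a_j-c_j)\ge 1-a_j-c_j$, so the summand is at least $(1-a_j)(1-c_j)\ge 0$. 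For the at-most-$k$ inequalities \eqref{eq:atmostk}, take $Q\subseteq V$ with $|Q|=k+1$; for each fixed $j\in P$ I apply Lemma~\ref{lemma: bilinear valid inequality} to the vector $(\hat{x}_{uj})_{u\in Q}\in[0,1]^{k+1}$ to get $\sum_{\{u,v\}\in\binom{Q}{2}}\hat{x}_{uj}\hat{x}_{vj}\ge\sum_{u\in Q}\hat{x}_{uj}-1$, and summing over $j\in P$ together with $\sum_{j}\hat{x}_{uj}=1$ and $|Q|=k+1=|P|+1$ yields $\sum_{\{u,v\}\in\binom{Q}{2}}\hat{z}_{uv}\ge(k+1)-k=1$. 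This is precisely where $n>k$ enters, since for $n\le k$ no such $Q$ exists.

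For strictness I would exhibit an explicit $\bar{z}\in\relaxemilo$ lying outside the projection. A convenient candidate is the uniform fractional point $\bar{z}_{uv}=\tfrac{2}{k(k+1)}$ on a chosen $(k+1)$-subset $Q$, illustrated most cleanly by $n=3$, $k=2$, where $\bar{z}_{uv}=\tfrac13$ on all three pairs, extended to the remaining pairs by values that preserve membership in $\relaxemilo$. Checking $\bar{z}\in\relaxemilo$ is a routine verification of \eqref{eq:triangle}--\eqref{eq:atmostk}, with the at-most-$k$ inequality on $Q$ holding with equality. The substance is to prove $\bar{z}\notin\proj_{z}\relaxbqoe$, i.e.\ that no $x$ with simplex rows satisfies $\sum_{j}x_{uj}x_{vj}=\bar{z}_{uv}$ for all pairs.

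I expect this non-realizability step to be the main obstacle, precisely because $\proj_{z}\relaxbqoe$ is the image of the bilinear map $x\mapsto\bigl(\sum_{j}x_{uj}x_{vj}\bigr)$ and is therefore a curved, non-polyhedral set, so ruling out every preimage cannot be reduced to checking finitely many linear constraints. The approach I would take is to exploit the Gram structure $\hat{z}_{uv}=\langle x_u,x_v\rangle$ with each $x_u$ in the probability simplex of $\mathbb{R}^k$: any realizing matrix must be positive semidefinite of rank at most $k$, and for the symmetric uniform point this rank/positivity restriction fails. In the representative $n=3$, $k=2$ case this reduces to the elementary fact that $\sum_{j}x_{uj}x_{vj}=\tfrac13$ forces, through identities of the form $(x_{v1}-x_{w1})(2x_{u1}-1)=0$, a system with no solution in the unit cube; I would then indicate how the same rank/curvature obstruction persists for general $n>k$ once the remaining vertices are placed consistently.
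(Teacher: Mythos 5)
Your proof of the inclusion $\proj_{z}\relaxbqoe \subseteq \relaxemilo$ is correct and follows essentially the same route as the paper: the same use of Lemma~\ref{lemma: bilinear valid inequality} for constraints~\eqref{eq:atmostk}, and an algebraically equivalent, summand-by-summand verification of the triangle inequalities~\eqref{eq:triangle} where the paper instead multiplies the base case $a+c\le 1+ac$ of that lemma by $\hat{x}_{vj}$ and sums. One small mislabeling: the hypothesis $n>k$ is not what the inclusion needs (for $n\le k$ constraints~\eqref{eq:atmostk} are vacuous and the inclusion still holds); it is needed only for strictness.

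The genuine gap is in the strictness argument. You choose the same witness as the paper, the uniform point $\bar z_{uv}=\frac{2}{k(k+1)}$, but you do not actually prove it lies outside $\proj_{z}\relaxbqoe$ for general $n>k$: you verify only the case $n=3$, $k=2$ and then appeal to a ``rank/positivity restriction'' on the Gram matrix that you assert ``fails.'' That assertion is unsubstantiated, and it is not obviously true as stated: the matrix $(\langle \hat{\mathbf{x}}_u,\hat{\mathbf{x}}_v\rangle)$ has free diagonal entries $\|\hat{\mathbf{x}}_u\|_2^2\in[1/k,1]$, so a realizing $x$ only needs to produce a positive semidefinite, rank-at-most-$k$, entrywise-nonnegative completion of the prescribed off-diagonal data, and ruling out all such completions is precisely the work that remains. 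The paper closes this step with a quantitative angle argument: since $\|\hat{\mathbf{x}}_v\|_1=1$ forces $\frac{1}{\sqrt k}\le\|\hat{\mathbf{x}}_v\|_2\le 1$, the identity $\bar z_{uv}=\|\hat{\mathbf{x}}_u\|_2\,\|\hat{\mathbf{x}}_v\|_2\cos\hat\theta_{uv}$ yields $\cos\hat\theta_{uv}\le\frac{2}{k+1}<\frac{1}{\sqrt k}$ for every pair (the last inequality being $(\sqrt k-1)^2>0$ for $k\ge 2$), whereas any collection of more than $k$ vectors in the nonnegative orthant of $\mathbb{R}^k$ must contain a pair with angle at most $\arccos(1/\sqrt k)$ --- a contradiction. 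You would need this estimate, or a comparably explicit obstruction valid for all $n>k$, to complete your proof.
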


\begin{proof}
Consider a point $(\hat{x}, \hat{z}) \in \relaxbqoe$. 
We are to show that $\hat{z} \in \relaxemilo$. 
For every set $\{u,v,w\} \subseteq V$, we show that point $\hat{z}$ satisfies constraints~\eqref{eq:triangle}. 
\begin{subequations}
\begin{align}
    \hat{z}_{uv} + \hat{z}_{vw} &= \sum_{j \in P} \hat{x}_{uj} \hat{x}_{vj} + \sum_{j \in P} \hat{x}_{vj} \hat{x}_{wj} \label{eq:comp1}\\
    &=\sum_{j \in P} \hat{x}_{vj} (\hat{x}_{uj} + \hat{x}_{wj}) \label{eq:comp2}\\
    &\le \sum_{j \in P} \hat{x}_{vj}(1+\hat{x}_{uj} \hat{x}_{wj}) \label{eq:comp3}\\
    & = \sum_{j \in P} \hat{x}_{vj} + \sum_{j \in P} \hat{x}_{vj}(\hat{x}_{uj} \hat{x}_{wj}) \label{eq:comp4}\\
    & = 1 + \sum_{j \in P} \hat{x}_{vj}(\hat{x}_{uj} \hat{x}_{wj})\label{eq:comp5}\\
    & \le 1 + \sum_{j \in P} \hat{x}_{uj} \hat{x}_{wj}\label{eq:comp6}\\
    & = 1 + \hat{z}_{uw}. \label{eq:comp7}
\end{align}
\end{subequations}
Equality~\eqref{eq:comp1} holds by definition~\eqref{eq:z variable definition}.
Inequality~\eqref{eq:comp3} holds by inequality~\eqref{eq:bilinear valid inequality simple case} in Lemma~\ref{lemma: bilinear valid inequality}.
Equality~\eqref{eq:comp5} holds by constraints~\eqref{eq: maxKcut_qp_2}.
Inequality~\eqref{eq:comp6} holds by the fact that $\hat{x}_{vj} \le 1$.
Equality~\eqref{eq:comp7} holds by definition~\eqref{eq:z variable definition}.

Furthermore, we show that point $\hat{z}$ satisfies constraints~\eqref{eq:atmostk}. 
For every vertex set $Q \subseteq V$ with $|Q| = k + 1$, we have
\begin{subequations}
\begin{align}
    \sum_{\{u,v\} \in \binom{Q}{2}} \hat{z}_{uv} &= \sum_{\{u,v\} \in \binom{Q}{2}} \sum_{j \in P} \hat{x}_{uj} \hat{x}_{vj} \label{eq:q1}\\
    &=\sum_{j \in P} \Biggpl \sum_{\{u,v\} \in \binom{Q}{2}} \hat{x}_{uj} \hat{x}_{vj} \Biggpr \label{eq:q2}\\
    &\ge \sum_{j \in P} \Biggpl \sum_{u \in Q} \hat{x}_{uj} - 1\Biggpr \label{eq:q3}\\
    &= \sum_{u \in Q} \sum_{j \in P} \hat{x}_{uj} - k \label{eq:q4}\\
    & = k+1-k = 1\label{eq:q5}.
\end{align}
\end{subequations}
Equality~\eqref{eq:q1} holds by definition~\eqref{eq:z variable definition}. 
Inequality~\eqref{eq:q3} holds by Lemma~\ref{lemma: bilinear valid inequality}. 
Equality~\eqref{eq:q5} holds by constraints~\eqref{eq: maxKcut_qp_2} and because $|Q| = k+1$.

Finally, for every $\{u,v\} \in \binom{V}{2}$, we show that $0 \le \hat{z}_{uv} \le 1$. 
Because for every vertex $v \in V$ and every partition $j \in P$ we have $\hat{x}_{vj} \ge 0$, it follows that $\hat{z}_{uv} \geq 0$. 
For every $\{u,v\} \in \binom{V}{2}$, we show that $\hat{z}_{uv} \le 1$.
\begin{align*}
    \hat{z}_{uv} = \sum_{j \in P} \hat{x}_{uj} \hat{x}_{vj} \le \sum_{j \in P} \hat{x}_{uj} = 1. 
\end{align*}
The first equality holds by definition~\eqref{eq:z variable definition}. 
The inequality holds because $\hat{x}_{vj} \le 1$ for every vertex $v \in V$ and every partition $j \in P$. The last equality holds by constraints~\eqref{eq: maxKcut_qp_2}. This implies that $\proj_{z}\relaxbqoe \subseteq \relaxemilo$.

Now we show that the inclusion is strict for any non-trivial instance of the max $k$-cut problem with $n>k$.   
For every $\{u,v\} \in \binom{V}{2}$, we define $\hat{z}_{uv}$ as a point that belongs to the polytope of the $\emilo$ formulation; that is, $\hat{z} \in \relaxemilo$.
\begin{equation*}
    \hat{z}_{uv} \coloneqq  \frac{2}{k(k+1)}.
\end{equation*}
For every vertex $v \in V$, let $\mathbf{x}_v\in [0,1]^k$ be the assignment vector of vertex $v$. 
By definition~\eqref{eq:z variable definition}, we have
\begin{equation}\label{eq:z based on dot product of x}
    \hat{z}_{uv} =  \hat{\mathbf{x}}_u^T \hat{\mathbf{x}}_v = \|\hat{\mathbf{x}}_u \|_2 \|\hat{\mathbf{x}}_v \|_2 \cos{\hat{\theta}_{uv}}.
\end{equation}
By constraints~\eqref{eq: maxKcut_qp_2}, we have $\|\hat{\mathbf{x}}_v \|_1 = 1$. 
Then for every vertex $v \in V$, we have
\begin{equation} \label{eq:norm 2 of x bounds}
    \frac{1}{\sqrt{k}} \leq \| \hat{\mathbf{x}}_v \|_2  \leq 1.
\end{equation}
The first inequality holds because $\| \hat{\mathbf{x}}_v \|_2$ reaches its minimum when $\hat{x}_{vj} = \frac{1}{k}$ for every partition $j \in P$. 
%

By lines~\eqref{eq:z based on dot product of x} and~\eqref{eq:norm 2 of x bounds}, we have 
\begin{equation*}
    \frac{2}{k(k+1)} \leq \cos{\hat{\theta}_{uv} } \leq \frac{2}{k+ 1}.
\end{equation*}
For every $\{u,v\} \in \binom{V}{2}$, this implies that we have the following relations because $k \ge 2$.
\begin{equation}\label{eq:bounds on angle}
    {\arccos{\Biggfl\frac{1}{\sqrt{k}}\Biggfr}} < \arccos{\Biggfl \frac{2}{k+ 1}\Biggfr} \leq \hat{\theta}_{uv} \leq \arccos{\Biggfl\frac{2}{k(k+1)}\Biggfr }.
\end{equation}
Consider $k+1$ vectors in $\mathbb{R}^k_+$ and let $\theta_{\min}$ be the minimum angle between all vector pairs.
It follows that the maximum value of $\theta_{\min}$ is $\arccos{\Bigfl \frac{1}{\sqrt{k}}\Bigfr}$. 
This case happens when $k$ vectors are located on the axes, and one vector is located at the center of the positive orthant.
Without loss of generality, consider $k$ unit vectors on $k$ different axes in $\mathbb{R}^k_+$  
and a vector with all entries equal to $\frac{1}{\sqrt{k}}$.
For example, the maximum values of $\theta_{\min}$ are $45^{\circ}$ and $\arccos{\Bigfl \frac{1}{\sqrt{3}}\Bigfr}\approx 54.7^{\circ}$ for $k=2$ and $k=3$, respectively.  
%

Since all vectors $\hat{\mathbf{x}}_v$ are in the positive orthant and $n>k$, there are vectors $\hat{\mathbf{x}}_a$ and $\hat{\mathbf{x}}_b$ with $ \hat{\theta}_{ab} \leq \arccos{\Bigfl \frac{1}{\sqrt{k}}\Bigfr}$. 
However, this contradicts the first inequality of line~\eqref{eq:bounds on angle}.
Thus, there is no feasible solution of the $\bqo$ formulation that satisfies definition~\eqref{eq:z variable definition}.
This completes the proof. 
\end{proof}

\citet{Wang2020-k-partition} prove that their reduced $\emilo$ ($\remilo$) formulation is as strong as the projection of the $\emilo$ formulation on the edges of an extended chordal graph. 
Further, their computational experiments show the superiority of the $\remilo$ over the $\emilo$ for sparse chordal graphs.
\subsection{$\bqo$ vs. {\normalfont MISDO}}
Now we define the continuous relaxation of the $\misdo$ formulation~\eqref{eq:MI-SDO} as follows.
\begin{align*}
 \relaxmisdo \coloneqq \Big\{Z \in [0,1]^{n\times n} \ \big| \ Z \text{ satisfies constraints~\eqref{eq:misdo-diag}--\eqref{eq:misdo-psd}}\Big\}.
\end{align*}
To conduct a theoretical comparison between the continuous relaxations of the BQO and MISDO formulations, we lift the dimensionality of the BQO formulation by introducing a new symmetric matrix $Z \in \mathbb{R}^{n \times n}$ defined as follows.
\begin{equation}\label{eq:Zdef}
    Z\coloneqq D^x + \sum_{j\in P} \mathbf{x}_j \mathbf{x}_j^T,
\end{equation}
with $D^x$ be a diagonal matrix and $D^x_{vv} = 1 - \sum_{j \in P}x_{vj}^2$.
For every partition $j\in P$, we redefine vector $\mathbf{x}_j\in [0,1]^n$ such that $\mathbf{x}_{jv} = x_{vj}$.
For comparison purposes, we also define $\relaxbqom$.
\begin{align*}
 \relaxbqom \coloneqq \bigg\{&(x,Z) \in [0,1]^{n \times k} \times \mathbb{R}^{n \times n} \ \Big| \ \\
 & (x,Z)\text{ satisfies constraints~\eqref{eq: maxKcut_qp_2}~\text{and}~\eqref{eq:Zdef}}\bigg\}.
\end{align*}

\citet{Eisenblater2002_Semidefinite} conducts a set of experiments on a semidefinite formulation, which was developed by~\citet{frieze1997} (see Remark~\ref{remark: misdo2}), and show the tightness of the continuous relaxation of their formulation computationally.
However, they declare that continuous relaxations of the semidefinite optimization and $\emilo$ formulations have feasible sets whose union is larger than that of any of the feasible sets (i.e., it is not straightforward to compare them). 
Furthermore,~\citet{de2019} propose $\misdo$-based constraints for the $\emilo$ formulation to strengthen its relaxation.
The following theorem compares the relaxation strength of the $\misdo$ formulation against the $\bqo$ formulation.
\begin{theorem} \label{proposition: bqo relaxation vs misdo}
 $\proj_{Z}\relaxbqom \subseteq \relaxmisdo$.
\end{theorem}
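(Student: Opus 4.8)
The plan is to take an arbitrary point $(\hat{x}, \hat{Z}) \in \relaxbqom$ and verify that $\hat{Z}$ satisfies all three defining conditions of $\relaxmisdo$: the box constraints $\hat{Z} \in [0,1]^{n\times n}$, the diagonal constraints~\eqref{eq:misdo-diag}, and the semidefinite constraint~\eqref{eq:misdo-psd}. The first two are short computations from definition~\eqref{eq:Zdef}. Since $D^x$ is diagonal, for $u \neq v$ we have $\hat{Z}_{uv} = \sum_{j \in P} \hat{x}_{uj}\hat{x}_{vj}$, which coincides with the $z$-variable already analyzed, so the bounds $0 \le \hat{Z}_{uv} \le 1$ follow exactly as in the proof of Theorem~\ref{proposition: bqo relaxation vs E-MILO}. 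For the diagonal, the diagonal correction in~\eqref{eq:Zdef} gives $\hat{Z}_{vv} = \bigl(1 - \sum_{j} \hat{x}_{vj}^2\bigr) + \sum_{j} \hat{x}_{vj}^2 = 1$, yielding constraints~\eqref{eq:misdo-diag} immediately.

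The substance of the argument is the semidefinite constraint $k\hat{Z} \succeq e e^T$. First I would use constraints~\eqref{eq: maxKcut_qp_2} to observe that $\sum_{j\in P} \mathbf{x}_j = e$, so that $e e^T = \bigl(\sum_{j} \mathbf{x}_j\bigr)\bigl(\sum_{l} \mathbf{x}_l\bigr)^T$. Then I would rewrite the target using the decomposition
\begin{equation*}
k\hat{Z} - e e^T = k D^x + \Bigl( k\sum_{j\in P} \mathbf{x}_j \mathbf{x}_j^T - e e^T \Bigr),
\end{equation*}
and show each summand is positive semidefinite. For the bracketed term, testing against an arbitrary $y \in \mathbb{R}^n$ and setting $t_j \coloneqq \mathbf{x}_j^T y$ reduces the inequality $y^T\bigl(k\sum_{j}\mathbf{x}_j\mathbf{x}_j^T\bigr)y \ge y^T(e e^T)y$ to $k\sum_{j\in P} t_j^2 \ge \bigl(\sum_{j\in P} t_j\bigr)^2$, which is exactly the Cauchy--Schwarz (power-mean) inequality over the $k$ partitions. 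For the term $kD^x$, I would note that $D^x$ is diagonal with entries $D^x_{vv} = 1 - \sum_{j} \hat{x}_{vj}^2$; since $\hat{x}_{vj} \in [0,1]$ gives $\hat{x}_{vj}^2 \le \hat{x}_{vj}$, summing and applying~\eqref{eq: maxKcut_qp_2} yields $\sum_{j} \hat{x}_{vj}^2 \le 1$, so $D^x_{vv} \ge 0$ and hence $D^x \succeq 0$. As the sum of two positive semidefinite matrices, $k\hat{Z} - e e^T$ is positive semidefinite.

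The main obstacle is recognizing the right decomposition of $k\hat{Z} - e e^T$; once the identity $e e^T = \bigl(\sum_{j}\mathbf{x}_j\bigr)\bigl(\sum_{j}\mathbf{x}_j\bigr)^T$ is in hand, the semidefiniteness collapses cleanly into Cauchy--Schwarz together with the nonnegativity of the diagonal correction $D^x$. I would also remark that this argument establishes only containment and not strictness, which is consistent with the statement being $\subseteq$ rather than $\subset$.
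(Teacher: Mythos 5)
Your proposal is correct and follows essentially the same route as the paper: the same decomposition $k\hat{Z} - ee^T = kD^x + \bigl(k\sum_{j\in P}\mathbf{x}_j\mathbf{x}_j^T - ee^T\bigr)$, with $D^x \succeq 0$ from $\hat{x}_{vj}^2 \le \hat{x}_{vj}$ and the second term handled by testing against an arbitrary vector and reducing to $k\sum_j t_j^2 \ge (\sum_j t_j)^2$ (the paper writes this as the sum-of-squares identity $\sum_{\{i,j\}\in\binom{P}{2}}(\beta_i-\beta_j)^2 \ge 0$ rather than citing Cauchy--Schwarz, an immaterial difference). Your explicit verification of the diagonal and box constraints is slightly more detailed than the paper's, which is fine.
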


\begin{proof}
For any fractional solution $x\in [0,1]^{n \times k}$, we have  $\sum_{j \in P}x_{vj}^2 \le 1$ by constraints~\eqref{eq: maxKcut_qp_2}.
Thus, $k D^x \succeq 0$ and $Z \in [0,1]^{n \times n}$ by definition~\eqref{eq:Zdef}.
Matrix $k \sum_{j\in P} \mathbf{x}_j \mathbf{x}_j^T -  e e^T$ is positive-semidefinite if and only if for every vector $b\in \mathbb{R}^n$, we have 
\begin{equation*}
    b^T \Biggfl k \sum_{j\in P} \mathbf{x}_j \mathbf{x}_j^T -  e e^T \Biggfr b \ge  0.
\end{equation*}
We define 
\begin{align}\label{eq:defOne}
    \beta_j \coloneqq b^T \mathbf{x}_j~~\forall j \in P,~\text{and}~\alpha \coloneqq b^T e. 
\end{align}
It suffices to show $k \sum_{j\in P} \mathbf{x}_j \mathbf{x}_j^T - e e^T \succeq 0$.
We can rewrite constraints~\eqref{eq: maxKcut_qp_2} as $\sum_{j\in P} \mathbf{x}_j = e$.
Thus, we have 
\begin{equation}\label{eq:defTwo}
    \sum_{j\in P} \beta_j = \sum_{j\in P}  b^T \mathbf{x}_j = b^T \sum_{j\in P} \mathbf{x}_j = b^T e = \alpha.
\end{equation}
So, we have that
\begin{align*}
    b^T \Biggfl k\sum_{j\in P} \mathbf{x}_j \mathbf{x}_j^T -  e e^T \Biggfr b 
    &= k \sum_{j \in P} \beta_j^2 - \alpha^2 \\
    &= k\sum_{j \in P} \beta_j^2 - \Biggfl \sum_{j \in P} \beta_j {\Biggfr}^2 \\
    %
    %
    %
    %
    & = \sum_{ \{i,j\}\in \binom{P}{2} } \big(\beta_i - \beta_j {\big)}^2 \ge 0.
\end{align*}
The first equality holds by definitions~\eqref{eq:defOne}.
The second equality holds by line~\eqref{eq:defTwo}.
This completes the proof.
\end{proof}

By Theorem~\ref{proposition: bqo relaxation vs misdo} and Remark~\ref{remark: misdo2}, we have the following corollary.

\begin{corollary}
$\proj_{\bar{Z}}\relaxbqomii \subseteq \relaxmisdoii$ with
\begin{align*}
 \relaxbqomii \coloneqq \bigg\{&(x,\bar{Z}) \in [0,1]^{n \times k} \times \mathbb{R}^{n \times n} \ \Big| \ (x,\bar{Z})\text{ satisfies constraints~\eqref{eq: maxKcut_qp_2}}\\
 & \text{and } \bar{Z} \coloneqq \frac{1}{k-1}\Big(kD^x - e e^T + k\sum_{j\in P} \mathbf{x}_j \mathbf{x}_j^T\Big)\bigg\}.
\end{align*}
\end{corollary}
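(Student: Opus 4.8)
The plan is to reduce the corollary to Theorem~\ref{proposition: bqo relaxation vs misdo} via the affine change of variables already recorded in Remark~\ref{remark: misdo2}. First I would verify the algebraic identity linking the two liftings: if $Z$ is the matrix defined in~\eqref{eq:Zdef} from a given $x$, then
\begin{equation*}
    \frac{k}{k-1}Z - \frac{1}{k-1}ee^T = \frac{1}{k-1}\Big(kD^x - ee^T + k\sum_{j\in P}\mathbf{x}_j\mathbf{x}_j^T\Big),
\end{equation*}
which is exactly the $\bar{Z}$ prescribed in the definition of $\relaxbqomii$. Hence the increasing affine map $Z \mapsto \bar{Z} = \frac{k}{k-1}Z - \frac{1}{k-1}ee^T$ carries the lifting behind $\relaxbqom$ onto the one behind $\relaxbqomii$; this identity is the crucial link that lets the $Z$-space result transfer to $\bar{Z}$-space.

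Next I would take an arbitrary $\bar{Z} \in \proj_{\bar{Z}}\relaxbqomii$, so there is some $x \in [0,1]^{n\times k}$ satisfying~\eqref{eq: maxKcut_qp_2} with $\bar{Z}$ given by the displayed formula. Defining $Z$ from this same $x$ as in~\eqref{eq:Zdef} places $(x,Z) \in \relaxbqom$, so Theorem~\ref{proposition: bqo relaxation vs misdo} yields $Z \in \relaxmisdo$; that is, $Z$ satisfies $Z_{vv}=1$, $kZ \succeq ee^T$, and $Z \in [0,1]^{n\times n}$.

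Finally I would push each of these three conditions through the affine map to land in $\relaxmisdoii$, the continuous relaxation of~\eqref{eq:MI-SDO2}. The diagonal constraint~\eqref{eq:misdo2-diag} is immediate since $\bar{Z}_{vv} = \frac{k}{k-1}Z_{vv} - \frac{1}{k-1} = 1$. The positive semidefiniteness~\eqref{eq:misdo2-psd} follows from $\bar{Z} = \frac{1}{k-1}(kZ - ee^T)$ together with $kZ - ee^T \succeq 0$ and $k-1 > 0$. The entrywise bounds $\bar{Z} \in [-\tfrac{1}{k-1},1]^{n\times n}$ follow by applying the increasing map $t \mapsto \frac{k}{k-1}t - \frac{1}{k-1}$ to the bounds $Z \in [0,1]^{n\times n}$. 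Because none of these steps is delicate, I do not anticipate a genuine obstacle; the only point requiring care is confirming that $\relaxmisdoii$ is precisely the affine image of the defining constraints of $\relaxmisdo$, so that the three translated conditions are exactly its defining constraints and the inclusion is faithful.
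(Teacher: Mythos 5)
Your proposal is correct and follows exactly the route the paper intends: the paper derives the corollary directly from Theorem~\ref{proposition: bqo relaxation vs misdo} together with the affine substitution $\bar{Z} = \frac{k}{k-1}Z - \frac{1}{k-1}ee^T$ of Remark~\ref{remark: misdo2}, which is precisely your argument. You simply spell out the details (the algebraic identity between the two liftings and the transfer of the diagonal, positive-semidefiniteness, and entrywise-bound conditions) that the paper leaves implicit.
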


\section{Computational Experiments} \label{section: computational experiments}
In this section, we conduct a set of experiments to evaluate our theoretical results in practice.
In better words, we compare the relaxations of the discussed formulations using state-of-the-art solvers.
We run the computational experiments on a machine with Dual Intel Xeon{\textregistered} CPU E5-2630~@ 2.20 GHz (20 cores) and 64 GB of RAM. 
We have developed the Python package \texttt{MaxKcut}~\citep{Fakhimi2022_maxkcut} to conduct the computational experiments. 
We employ Gurobi 10.0.0~\citep{gurobi} to solve our mixed integer optimization formulations.
We also use MOSEK~10.0.27~\citep{mosek} to run our experiments for the $\misdos$ formulation.
In both Gurobi and MOSEK solvers, we set the number of threads and time limit  to 10 and 3,600 seconds, respectively.
The code, instances, and results are available on GitHub~\citep{Fakhimi2022_maxkcut}.

We run our experiments on the following sets of instances: (i) band~\citep{Wang2020-k-partition}, (ii) spinglass~\citep{Wang2020-k-partition}, (iii) Color02~\citep{color2}, and (iv) Steiner-160~\citep{koch2001} (in total 97 different instances).
We classify these instances into different classes based on the number of vertices (from 50 to 250) and their density (from 5 percent to 100 percent).
Thanks to the  spatial branch and bound algorithm, Gurobi can solve the continuous relaxation of the BQO formulation~\eqref{eq: maxKcut_qp} to global-$\epsilon$ optimality; however, we stop the solving process whenever the solver reaches the one-hour time limit. 
The same time limit is set for all solvers, as mentioned above.

Because of the large number of clique constraints in the $\remilo$ formulation (i.e., a sparse variant of the $\emilo$ model~\eqref{eq:E-MILO} proposed by~\citet{Wang2020-k-partition}) for large-scale instances, it is not practical to add all of them upfront. 
Thus, we initially relax the clique constraints in the $\remilo$ formulation to avoid memory shortage. 
After solving the relaxed formulation to optimality, we iteratively add the most violated constraints using the dual Simplex method and solve the problem.
We stop whenever there is not enough free memory.
All the clique constraints are added upfront for small instances with $n \le 100$.
We note that the solving process of all instances with $n > 100$ reaches the time limit.

Figure~\ref{fig: geo-mean} summarizes our results with instances on the horizontal axis and the geometric mean of the scaled relaxed objective values on the vertical axis.
We scale the upper bounds by the best obtained upper bound to report the results.
By scaled relaxed objective, we mean that the best upper bound on a given instance is set to 1 and the other upper bounds are scaled by this best upper bound.
Furthermore, we use the geometric mean of the scaled upper bound for every batch of instances.
For $k \in \{3,4\}$, we solve the following formulations:
\begin{enumerate}[label=(\roman*)]
    \item the $\bqo$ formulation~\eqref{eq: maxKcut_qp};
    \item the continuous relaxation of the $\bqo$ formulation~\eqref{eq: maxKcut_qp};
    \item the continuous relaxation of the $\vmilo$ formulation~\eqref{eq:maxKcut};
    \item the continuous relaxation of the $\remilo$ formulation;
    \item the continuous relaxation of the MISDO formulation~\eqref{eq:MI-SDO2}. 
\end{enumerate}

Figure~\ref{fig: geo-mean} shows the superiority of the $\bqo$ formulation and its  relaxed variant over both $\vmilo$ and $\remilo$ formulations when $k \in \{3,4\}$.
This observation matches the results of Theorems~\ref{proposition: bqo relaxation vs milo} and~\ref{proposition: bqo relaxation vs E-MILO}.
%
%
Although Theorem~\ref{theorem: BQO vs its relxation} implies that both the BQO formulation~\eqref{eq: maxKcut_qp} and its corresponding continuous relaxation solve the max $k$-cut problem, Figure~\ref{fig: geo-mean} shows that solving the $\bqo$ formulation provides a better upper bound in comparison with solving the continuous relaxation of the BQO formulation within the one-hour time limit.
%
We note that Gurobi's recent advancements in handling non-convex quadratic optimization problems enabled us to solve the BQO formulation.

Figure~\ref{fig: geo-mean} illustrates the inferiority of the relaxation of the $\vmilo$ formulation~\eqref{eq:maxKcut} over all other formulations for all sets of instances when $k \in \{3,4\}$.
This behavior is justifiable by Remark~\ref{remark: optimal_obj_vmilo}.
We also observe that the relaxed $\misdos$ formulation~\eqref{eq:MI-SDO2} almost matches the best upper bound provided by solving the BQO formulation~\eqref{eq: maxKcut_qp}.
For sparse instances, it performs similarly to the relaxed $\remilo$ formulation.
In our experiments, we chose to solve the continuous relaxation of the $\misdos$ formulation~\eqref{eq:MI-SDO2} instead of the $\misdo$ formulation~\eqref{eq:MI-SDO} since MOSEK can take advantage of the sparsity of the $\misdos$ formulation.
MOSEK employs the {\em interior point method} (IPM) to solve the semidefinite optimization formulations~\cite{mosek}.
For a given instance with $n$ vertices, the IPM requires the solution of a linear system in $\mathbb{R}^{\mathcal{O}(n^2) \times \mathcal{O}(n^2) }$ at every iteration.
These extremely large linear systems exhaust all the memory as the algorithm converges to an optimal solution, and their condition numbers grow~\cite{Roos2005_Interior}. 
Regardless of the decent performance of the $\misdo$ formulation~\eqref{eq:MI-SDO} on small and medium-sized instances, it is not practical for solving instances with more than 200 vertices.
However, MOSEK solves larger instances of continuous relaxation of the $\misdos$ formulation~\eqref{eq:MI-SDO2}.
\begin{figure*}[ht]
\centering
\begin{subfigure}[t]{1\textwidth}
	\centering
	\includegraphics[width =1 \linewidth, trim = {.1cm .1cm .1cm .1cm}, clip ]{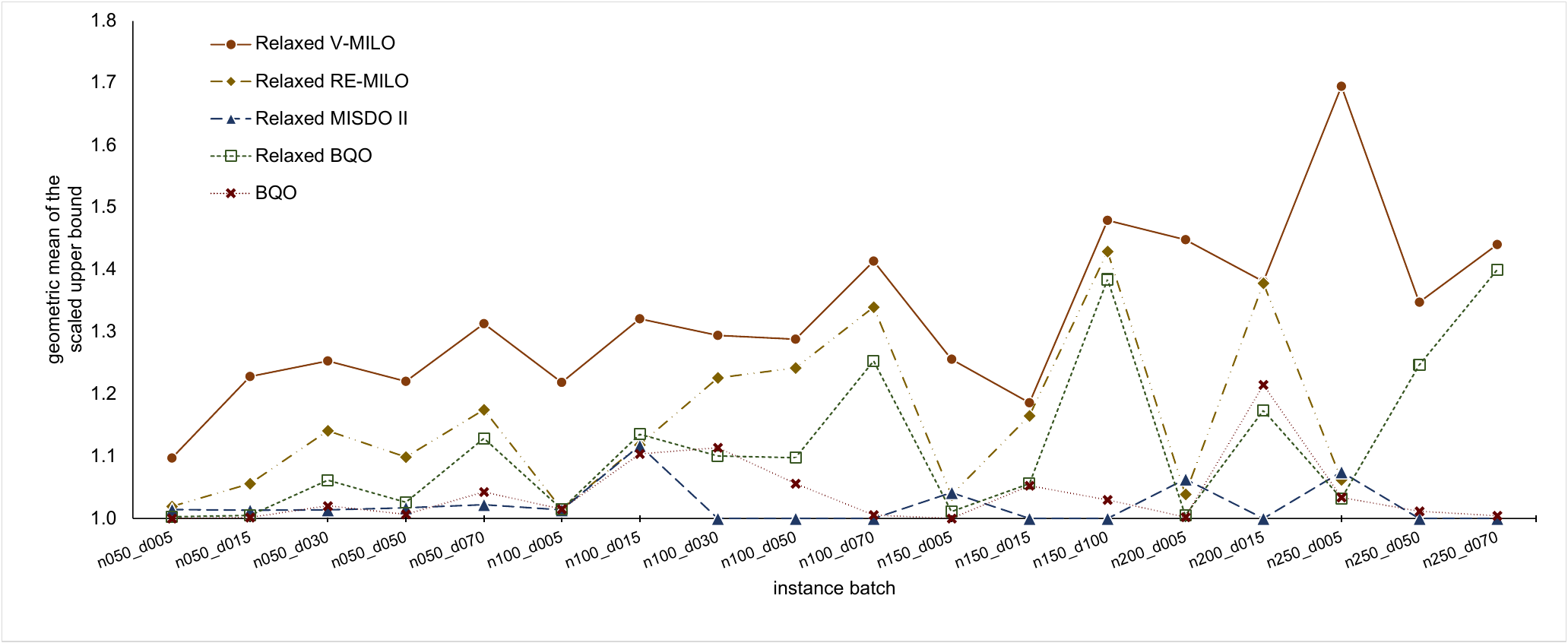}
	\caption{$k = 3$.}\label{fig: geo-mean k3}
\end{subfigure}\\
\begin{subfigure}[t]{1\textwidth}
	\centering
	\includegraphics[width =1 \linewidth, trim = {.1cm .1cm .1cm .1cm}, clip ]{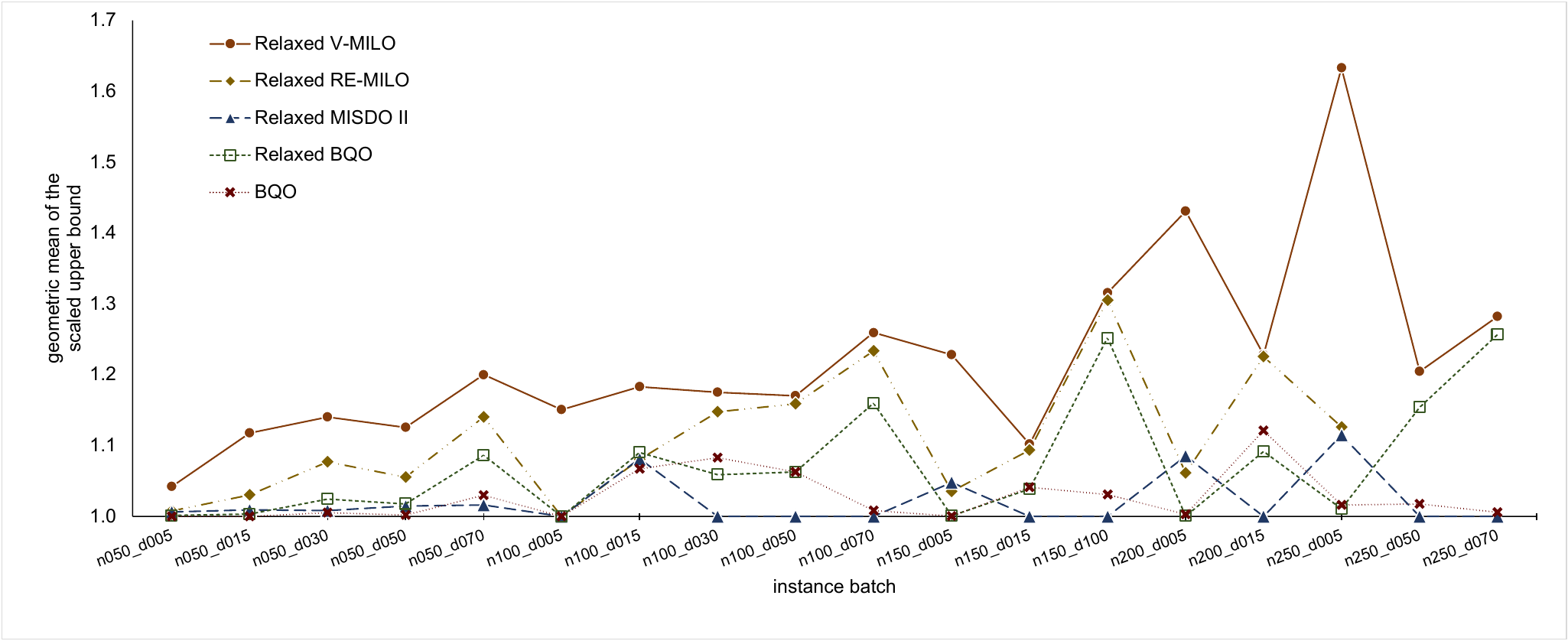}
	\caption{$k = 4$.}
	\label{fig: geo-mean k4}
\end{subfigure}
\caption{The geometric mean of the scaled upper bound obtained by different methods in an hour time limit ($n$ represents the number of vertices, and $d$ denotes the graph density in percentage).}\label{fig: geo-mean}
\end{figure*}

\section{Conclusion}
Motivated by the importance of continuous relaxation in the solution process of the mixed integer optimization formulations of NP-hard problems, we compared the continuous relaxations of four well-known formulations of a fundamental combinatorial optimization problem, that is, the max $k$-cut problem.   
We proved that the continuous relaxation of a binary quadratic optimization formulation is tighter than other existing formulations; specifically, vertex-based and edge-based mixed integer linear optimization formulations.
We observed that our numerical experiments support the theoretical results for the superiority of the continuous relaxation of the $\bqo$ formulation over the continuous relaxations of $\vmilo$ and $\emilo$ formulations.
As a direction of future work, one might be interested in comparing the nonconvex optimization formulations of other combinatorial optimization problems with convex counterparts.
While many believe that convex formulations may outperform non-convex formulations in practice, our results show that this might not always be the case, thanks to the improved capability of solvers in handling non-convex quadratic optimization formulations.

\section*{Acknowledgments}
This work is supported by the Defense Advanced Research Projects Agency (DARPA), ONISQ grant W911NF2010022 titled {\em The Quantum
Computing Revolution and Optimization: Challenges and Opportunities}.
This research also used resources from the Oak Ridge Leadership Computing Facility, which is a DOE Office of Science User Facility supported under Contract DE-AC05-00OR22725.
We would like to thank an anonymous reviewer for constructive comments that improved our manuscript.

\bibliographystyle{elsarticle-num-names} 
\bibliography{max-k-cut}



\end{document}